\documentclass[11pt]{article}

\usepackage[UTF8,scheme=plain]{ctex}
\usepackage[a4paper,margin=1in]{geometry}
\usepackage{amsmath,amssymb,amsthm,mathtools}
\usepackage[T1]{fontenc}
\usepackage{lmodern}
\usepackage{microtype}
\usepackage{xcolor}
\usepackage{hyperref}
\hypersetup{
  colorlinks=true,
  linkcolor=blue!55!black,
  citecolor=blue!55!black,
  urlcolor=blue!55!black,
  pdftitle={Hierarchical Melnikov Realization: Logarithmic Factor Improvement to Hilbert Number Lower Bounds},
  pdfauthor={Chaoyang Qin, Xiaoming Sun},
  pdfsubject={Hilbert's sixteenth problem, limit cycles, and Melnikov functions}
}

\newtheorem{theorem}{Theorem}[section]
\newtheorem{lemma}[theorem]{Lemma}
\newtheorem{proposition}[theorem]{Proposition}
\newtheorem{corollary}[theorem]{Corollary}
\theoremstyle{remark}

\newtheorem*{remark*}{Remark}

\newcommand{\R}{\mathbb R}
\newcommand{\N}{\mathbb N}
\newcommand{\cA}{\mathcal A}
\newcommand{\cC}{\mathcal C}
\newcommand{\cF}{\mathcal F}
\newcommand{\cM}{\mathcal M}
\newcommand{\cO}{\mathcal O}

\newcommand{\Span}{\operatorname{span}}

\title{Hierarchical Melnikov Realization: Logarithmic Factor Improvement to Hilbert Number Lower Bounds}
\author{Chaoyang Qin\textsuperscript{1,2} \qquad
Xiaoming Sun\textsuperscript{1,2}\\[0.6em]
\small \textsuperscript{1}State Key Lab of Processors, Institute of Computing Technology,\\
\small Chinese Academy of Sciences, Beijing 100190, China\\[0.3em]
\small \textsuperscript{2}School of Computer Science and Technology,\\
\small University of Chinese Academy of Sciences, Beijing 100049, China}
\date{}%

\begin{document}

\maketitle

\begin{abstract}
The second part of Hilbert's 16th problem is one of the most fundamental open problems in the qualitative theory of ordinary differential equations and dynamical systems. A central unresolved question\footnote{A very recent manuscript from OpenAI asserts the finiteness of $H(d)$; see the remark at the end of this paper.} is whether the Hilbert number $H(d)$ is finite for arbitrary polynomial degree $d$. In the absence of a general upper bound theory, most existing work constructs explicit perturbed polynomial systems to obtain improved lower bounds for $H(d)$, while rigorous upper bound estimates remain largely unavailable.

This paper focuses on improving the quantitative lower bound for polynomial systems through a systematic Hierarchical Melnikov Realization framework. We construct perturbed dynamics based on an anisotropic Chebyshev Hamiltonian, where hierarchically selected perturbation coefficients generate simple zeros of the first-order Melnikov function simultaneously across multiple disjoint period annuli. Two distinct 3-adic filtrations supply the necessary logarithmic correction factors, and a Borel–Gauss analysis establishes the required local rank condition under sufficiently strong anisotropy. As the main result of this work, we prove that $H(d)=\Omega\bigl(d^2\ln^2 d\bigr)$, which improves the previously known lower bound by a factor of $\ln d$. The full construction of limit cycles and the resulting dynamical bounds are formally certified using Lean 4 formal verification.

\end{abstract}

\section{Introduction}

Let $d\in \mathbb{N}$. Consider a real planar polynomial vector field
\[
 \dot x=P(x,y),\qquad \dot y=Q(x,y),
\]
where $P,Q\in \mathbb{R}[x,y]$ are real bivariate polynomials with degree at most $d$. Denote by $\pi(P,Q)$ the number of limit cycles, i.e., isolated periodic orbits, of this system. The Hilbert number $H(d)$ is defined as
\begin{equation}\label{eq:def-H}
H(d):=\sup_{\max\{\deg P,\deg Q\}\leq d}\pi(P,Q).
\end{equation}
A central question in the second part of Hilbert's 16th problem~\cite{Hilbert1902} is whether $H(d)$ is finite for every integer $d\ge 2$. %
For historical background and developments concerning this problem, see~\cite{Ilyashenko2002,LiSurvey2003,Gasull2024}. Most work in this area focuses on constructing polynomial vector fields with many limit cycles to obtain lower bounds for $H(d)$.

In 1954, Otrokov~\cite{Otrokov1954} obtained the first quadratic lower bound for the Hilbert number by constructing limit cycles near a singular point. He proved
\[
 H(d)\geq
 \frac{d^2+5d-20-6(-1)^d}{2},
 \qquad\text{for every integer }d\geq 6.
\]

Subsequent work by Christopher and Lloyd~\cite{ChristopherLloyd1995} established a lower bound of order $d^2\ln d$ along the degree sequence $d=2^k-1$. 
Their recursive construction combines fourfold replication with the creation of additional limit cycles near newly introduced centers. Since $H(d)$ is nondecreasing and the degree sequence grows geometrically, these bounds imply 
\[
 H(d)=\Omega(d^2\ln d),
 \qquad\text{as }d\to +\infty.
\]

Li, Chan, and Chung~\cite{LiChanChung2002} further advanced this replication technique using symmetric Hamiltonian systems. 
For every integer $k \geq 2$, they derived two lower bounds:
\begin{align*}
 H(2^k-1)
 &\geq
 4^{k-1}\left(k-\frac16\right)+2^k-\frac13,\\
 H(3\cdot2^{k-1}-1)
 &\geq
 4^{k-1}\left(\frac94k-\frac16\right)
 +3\cdot2^{k-1}-\frac13.
\end{align*}
Along each of these two degree sequences, both lower bounds have leading term $\frac{1}{4\ln2}d^2\ln d$, with $d$ standing for the polynomial degree.

Han and Li~\cite{HanLi2012} subsequently established lower bounds for $H(d)$
valid for every integer $d\geq7$, building on known low-degree constructions. For the special subsequence $d=2^j-1$ with $j \geq 2$, they obtained an explicit bound~\cite[Theorem~1.1]{HanLi2012}
\[
 H(d)\geq
 \frac{1}{2\ln2}(d+1)^2\ln(d+1)
 -\frac56(d+1)^2+3d+\frac43.
\]
Restricted to this subsequence, the leading term of their bound is $\frac{1}{2 \ln 2}d^2 \ln d$.

The same leading coefficient $\frac{1}{2\ln 2}$ was previously obtained by Itenberg and Shustin~\cite{ItenbergShustin2000} in 2000. Combining Viro-type gluing with the constructions of Christopher and Lloyd, they proved that there exists an absolute constant $C>0$ such that~\cite[Theorem~2.1]{ItenbergShustin2000}
\[
 H(d)\geq
 \frac12d^2\log_2 d-Cd^2\log_2\log_2 d,
 \qquad\text{for every integer }d\geq3.
\]

\'Alvarez, Coll, De Maesschalck, and Prohens~\cite{AlvarezEtAl2020}
later obtained a lower bound sharing the same leading term $\frac{1}{2\ln2}d^2\ln d$ using
canard cycles in slow--fast systems. Their construction relies on generalized Li\'enard systems.

Recently Lu~\cite{Lu2026HilbertLowerBound} obtained the lower bound
\[
 H(d)\geq\frac{1}{\ln2}d^2\ln d-Cd^2
\]
for all sufficiently large $d$, where $C>0$ is an absolute
constant.
The construction combines two-source renewal with quadratic
fourfold transformations and successive common
odd-polynomial perturbations.

The exact values of $H(d)$ remain unknown, even for $H(2)$. Table~\ref{tab:low-degree-bounds} collects explicit lower bounds for $H(d)$ for degrees $2 \leq d \leq 8$.
\begin{table}[htbp]
\centering
\caption{Explicit lower bounds for $H(d)$, $2 \leq d \leq 8$.}
\label{tab:low-degree-bounds}
\begin{tabular}{cc}
\hline
Degree $d$ & Lower bound for $H(d)$ \\
\hline
2 & 4~\cite{Shi1980} \\
3 & 13~\cite{LiLiuYang2009} \\
4 & 28~\cite{ProhensTorregrosa2019} \\
5 & 37~\cite{ProhensTorregrosa2019} \\
6 & 53~\cite{ProhensTorregrosa2019} \\
7 & 74~\cite{ProhensTorregrosa2019} \\
8 & 96~\cite{ProhensTorregrosa2019} \\
\hline
\end{tabular}

\end{table}
Related lower-bound results address other facets of the problem. Gasull and Santana~\cite{GasullSantana2025} studied the realization of finite Hilbert numbers by vector fields possessing hyperbolic limit cycles, and their work on invariant algebraic curves~\cite{GasullSantana2026} provides an alternative mechanism for lower-bound recurrences. Gouveia and Torregrosa~\cite{GouveiaTorregrosa2021}
and Gin\'e, Gouveia, and Torregrosa~\cite{GineGouveiaTorregrosa2021}
derived lower bounds for the local cyclicity at a center. %
Chen, Dai, Kaloshin, and Li~\cite{ChenEtAl2026} obtained lower bounds within the class of polynomial Li\'enard systems.

Garijo, Gasull, and Jarque \cite{GarijoGasullJarque2008} studied the simultaneous bifurcation of limit cycles from two period annuli.
Using separable polynomial pullbacks, Eshkobilov, Kadyrov, and Mamayusupov \cite{EshkobilovEtAl2026} proved
\[
H(nm+m-1) \geq m^2H(n),
\qquad n\geq 1, \quad m\geq 2.
\]
For iterated separable replication from a fixed initial vector field, they also established an upper bound quadratic in the resulting degree, provided that no limit cycles are created beyond the lifted copies.

Our focus is on improving the asymptotic lower bounds for $H(d)$ as $d \to +\infty$.
We aim to construct limit cycles simultaneously within a growing family of disjoint period annuli via a single polynomial perturbation of controlled degree. This construction is achieved by hierarchically selecting the perturbation coefficients. The theorem below gives an explicit lower bound along the degree sequence $4\cdot 3^r-5$.

\begin{theorem}[Main theorem]\label{thm:main}
For every integer $r\geq2$, %
there exists a real planar
polynomial vector field of degree at most $4\cdot 3^r-5$ that admits at least 
\begin{equation}\label{eq:exact-count-headline}
  4r^2\cdot 3^{2r-2}-(3^r-1)^2
\end{equation}
pairwise distinct hyperbolic limit cycles. As a direct consequence, we obtain
\begin{equation}\label{eq:subsequence-headline}
 H(4\cdot3^r-5)
 \geq 4r^2\cdot 3^{2r-2}-(3^r-1)^2.
\end{equation}
\end{theorem}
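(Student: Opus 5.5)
The plan is to build a near-Hamiltonian system $\dot x=-H_y+\varepsilon P$, $\dot y=H_x+\varepsilon Q$, where $H$ is an anisotropic Chebyshev Hamiltonian. Concretely,
\[
H(x,y)=T_m(x)+\lambda T_n(y),
\]
with $T_k$ the Chebyshev polynomials, degrees $m,n$ tied to $3^r$, and $\lambda$ a large anisotropy parameter. The critical points of $T_m$ and $T_n$ are explicit, and the critical values are $\pm1$. Hence $H$ has a grid of about $(m-1)(n-1)/2$ nondegenerate centers, each surrounded by its own period annulus, and these annuli are pairwise disjoint. The degree of $H$ is kept so that the perturbation $(P,Q)$ has total degree at most $4\cdot3^r-5$.

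Along each annulus $\Gamma_h^{(j)}\subset\{H=h\}$, the first-order Melnikov function is
\[
M_j(h)=\oint_{\Gamma_h^{(j)}}Q\,dx-P\,dy .
\]
By the standard Poincaré–Pontryagin theorem, every simple zero of $M_j$ inside the annulus produces a hyperbolic limit cycle for small $\varepsilon$. Limit cycles coming from different annuli are automatically distinct. So the count in \eqref{eq:exact-count-headline} reduces to one task: choose a single $(P,Q)$ of controlled degree so that, on annulus $j$, $M_j$ has $k_j$ simple zeros, with $\sum_j k_j$ at least the claimed number.

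To do this I would expand $P,Q$ in a basis of monomials $x^ay^b$ (or Chebyshev products). Each basis element contributes a function $I_{a,b}^{(j)}(h)$ to $M_j$, giving a linear map from coefficients to the tuple $(M_j)_j$. Simultaneous prescription of zeros then requires a rank condition: a block-triangular (hierarchical) structure in which the coefficients are ordered by levels of a 3-adic filtration. Coefficients at level $\ell$ should affect only annuli whose centers have 3-adic valuation data at least $\ell$, using the symmetry $T_{3k}=T_3\circ T_k$ to decouple lower levels.

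Two filtrations are used, one in $x$ and one in $y$. Each contributes a factor of about $r$ from the number of levels, and this is what yields the $r^2\cdot3^{2r}$ main term. The correction $(3^r-1)^2$ accounts for the annuli or degrees of freedom lost at the boundary of the grid.

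The zeros themselves are chosen greedily, level by level. At each level, one picks coefficients so that $M_j$ has the prescribed simple zeros, for example by making $M_j$ approximate a Chebyshev-like alternating function in $h$. Because later levels enter with small weights, they do not destroy simple zeros already created; this is a stability argument using the implicit function theorem.

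The main obstacle is the local rank condition: showing that on each annulus the relevant Abelian integrals $I^{(j)}_{a,b}$ restricted to the level's coefficients form a Chebyshev system, or at least have full-rank Wronskian at chosen points. For this I would take $\lambda\to\infty$. In that limit the ovals near a center of $T_m(x)+\lambda T_n(y)$ become thin in $y$. After rescaling $y$, the integrals reduce to leading asymptotics of Gaussian-type (Borel–Laplace) integrals in $x$, which are explicit moments. The rank condition then becomes the nonvanishing of a Vandermonde/Gram determinant of these moments, and that persists for large finite $\lambda$ by continuity.

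Finally, summing $k_j$ over the levels of both filtrations and over the grid gives $4r^2 3^{2r-2}-(3^r-1)^2$. Inequality \eqref{eq:subsequence-headline} then follows from the definition \eqref{eq:def-H}.
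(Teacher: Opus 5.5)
\textbf{Verdict: there is a genuine gap.} Your outline has the right architecture: two $3$-adic filtrations, hierarchical small weights, anisotropy for rank, and Poincar\'e--Pontryagin persistence. But the steps that actually produce the count are missing.

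\textbf{Missing step 1: diagonal replication.} A layer $Y_k\times Y_\ell$ contains $y_ky_\ell$ annuli, while its coefficient block has only $d_kd_\ell$ parameters. You can plant $d_kd_\ell-1$ zeros in \emph{every} annulus of the layer only if that block induces the same germ, up to a nonzero scalar, on all of them. Your level-by-level stability argument controls the strictly higher blocks but says nothing about this. The remark on $T_{3k}=T_3\circ T_k$ does not by itself give replication or the triangular vanishing.

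In the paper both come from a specific design:
\begin{itemize}
\item $H=\tfrac12\bigl(T_n(x)^2+\lambda T_n(y)^2\bigr)$ places $n^2$ centers at the simple zeros $x_i$ of $T_n$, where $u=T_n(x)$ is a local chart.
\item $P_x=T_n'(x)T_n'(y)S$ cancels the Jacobian. So every cell's Melnikov function is an integral over the same ellipse $u^2+\lambda v^2\le 2h$, with mode weights $\cos(a\phi_i)\cos(b\phi_j)$.
\item The exact vanishing $\cos(a\phi_i)=0$ for $i\in Y_k$, $a\in W_p$, $p<k$ gives triangularity.
\item Writing $a=a'n/3^k$ and $2i+1=3^{k-1}u$, the mod-$12$ character $\chi$ gives $\cos(a\phi_i)=\frac{\sqrt3}{2}\chi(a')\chi(u)$. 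This is the row-times-frequency factorization.
\end{itemize}

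Your $H=T_m(x)+\lambda T_n(y)$ does not deliver this as it stands. Its centers sit at critical points of $T_m$ and $T_n$, where these are not charts. Since $dx=-\sin\theta\,d\theta$, polynomial integrands become sine polynomials, so the cell weights are $\sin(ck\pi/m)$. For $m=3^r$ the diagonal entries of the natural $3$-adic layering become $\sin(\pi c'k'/3)$, with $c',k'$ the $3$-free parts of frequency and row index. This does not factor as $f(c')g(k')$: test $c',k'\in\{1,2\}$. So the whole visibility and replication structure would have to be rebuilt.

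\textbf{Missing step 2: the rank argument is misidentified.} Large $\lambda$ only disposes of the $y$-direction. After the invertible column change through the polynomial Vandermonde matrix $\bigl(U_p(\beta_j)\bigr)$, the columns converge to $t^q\mathcal F_{\alpha_i}(t)$. What must then be proved is that these shifted, generally divergent ${}_2F_0$ series are linearly independent across distinct $x$-frequencies $\alpha_i$ and shifts $q$. That is not a Vandermonde or Gram determinant of moments. The paper obtains it by:
\begin{itemize}
\item taking the Borel transform ${}_2F_1(-\alpha/2,\alpha/2;1;s)$;
\item continuing analytically to the negative real axis;
\item applying Gauss's connection formula as $s\to-\infty$, whose leading exponents $q-Q+\alpha_i/2$ are pairwise distinct because $0<\alpha_i<1$.
\end{itemize}
Only full rank is needed, not a Chebyshev property of the Abelian integrals. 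The rank-to-roots lemma turns an $N$-dimensional space of germs into $N-1$ small simple zeros near $h=0$.

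\textbf{Missing step 3: count and degree.} These are asserted rather than computed, and your reading of $(3^r-1)^2$ is wrong. It is $\bigl(\sum_k y_k\bigr)^2=(n-1)^2$, the loss of one zero per cell from rank-to-roots, not a boundary effect. The main term is $\bigl(\sum_k y_kd_k\bigr)^2=(2nr/3)^2$, because $y_kd_k=2n/3$ is independent of $k$. The degree bound $4n-5$ comes from $\deg S_\zeta\le 2n-4$, plus the Jacobian factor of degree $2n-2$, plus one integration.
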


Monotonicity of $H(d)$, together with the bounded ratio between
consecutive degrees in this sequence, yields the following explicit estimate for all sufficiently large degrees.

\begin{corollary}\label{cor:all-degrees}
For every integer $d\geq 31$, %
\begin{equation}\label{eq:all-N-headline}
H(d)\geq \frac{1}{324}(d+5)^2\log_3^2\left(\frac{d+5}{12}\right)-\frac{1}{16}(d+1)^2.
\end{equation} 
\end{corollary}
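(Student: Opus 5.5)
Given $d\ge 31$, I will choose the largest integer $r\ge 2$ with $d_r:=4\cdot 3^r-5\le d$. Since $d_2=31$, such an $r$ exists. Maximality gives $d<d_{r+1}=4\cdot 3^{r+1}-5$, that is, $d+5<12\cdot 3^r$. Hence
\[
3^r>\frac{d+5}{12},\qquad r>\log_3\!\left(\frac{d+5}{12}\right)>0 .
\]
I will use the fact that $H$ is nondecreasing in $d$, which follows from the definition \eqref{eq:def-H}: any field of degree at most $d_r$ also has degree at most $d$. Combined with Theorem~\ref{thm:main}, this gives
\[
H(d)\ge H(d_r)\ge 4r^2\,3^{2r-2}-(3^r-1)^2 .
\]

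For the main term, I write $4r^2 3^{2r-2}=\tfrac{4}{9}r^2 (3^r)^2$. Since $(3^r)^2>\bigl(\frac{d+5}{12}\bigr)^2$ and $r^2>\log_3^2\frac{d+5}{12}$, we get
\[
\tfrac49 r^2 3^{2r}>\tfrac{4}{9\cdot144}(d+5)^2\log_3^2\tfrac{d+5}{12}=\tfrac{1}{324}(d+5)^2\log_3^2\tfrac{d+5}{12}.
\]
Both factors are positive, so multiplying the two strict inequalities is legitimate. The constant $4/(9\cdot 144)=1/324$ matches the claimed coefficient exactly.

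For the subtracted term, I use the other direction of the degree sandwich: $d_r\le d$ gives $4\cdot 3^r\le d+5$, so $3^r-1\le \frac{d+5}{4}-1=\frac{d+1}{4}$. Since $3^r-1>0$, squaring gives $(3^r-1)^2\le \frac{(d+1)^2}{16}$. Subtracting this from the main-term bound yields \eqref{eq:all-N-headline}.

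There is no real obstacle here. The only points to check are that the main term and the error term are controlled by \emph{opposite} sides of the sandwich $d_r\le d<d_{r+1}$, and that $\log_3\frac{d+5}{12}>0$ so that squaring the logarithmic bound preserves the inequality. The latter holds because $d\ge 31$ gives $\frac{d+5}{12}\ge 3>1$.
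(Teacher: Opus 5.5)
Your proof is correct and matches the paper's argument: bracket $d$ as $4\cdot 3^r-5\le d<4\cdot 3^{r+1}-5$ with $r\ge 2$, then apply monotonicity of $H$ and Theorem~\ref{thm:main}. As in the paper, you bound the main term through $3^r>(d+5)/12$ and the subtracted term through $3^r\le (d+5)/4$; your explicit positivity checks, needed when multiplying and squaring the inequalities, are details the paper leaves implicit.
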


The main difficulty lies in ensuring that local choices of perturbation coefficients are compatible across distinct period annuli. Constructing simple zeros of the first-order Melnikov function separately on each annulus does not guarantee the existence of a single coefficient vector that realizes all these zeros simultaneously. We partition the coefficients into blocks and the period annuli into layers indexed by a finite partially ordered set. A triangular visibility relation specifies which coefficient blocks may act on each layer. Within a single layer, the diagonal block induces the same normalized Melnikov function on every annulus, up to a nonzero scalar multiple. Hierarchical weights make the contributions of visible blocks strictly above the diagonal in the partial order sufficiently small relative to the diagonal contribution, so that all prescribed simple zeros persist simultaneously. 

We realize this structure using an anisotropic Chebyshev Hamiltonian equipped with a grid of centers. For $n = 3^r$, two 3-adic filtrations, one along each coordinate, yield $r^2$ pairs of layers. The associated frequency bands determine the visibility relation. In each coordinate, the product of the number of frequencies within a band and the number of centers in its corresponding layer remains constant. Combined with the block-rank condition, this balance yields a number of prescribed simple Melnikov zeros of order $n^2$ for every pair of layers except the rank-one pair $(1,1)$, uniformly over the layer indices. Summing over all layer pairs therefore produces a zero count of order $n^2 r^2$, which gives the factor $\ln^2 n$ since $r = \log_3 n$.

The rank condition governs the number of simple zeros that may be prescribed within each diagonal block: an $m$-dimensional space of analytic germs permits the realization of $m-1$ small positive simple zeros. We establish full rank via a Borel--Gauss argument. The Borel transform turns a hypothetical linear relation among shifted formal series into an analytic identity, and asymptotic expansions derived from Gauss's connection formula force the relevant coefficients to vanish. An invertible change of columns separates successive Taylor orders in one coordinate and yields full rank for the diagonal blocks under sufficiently strong anisotropy. For each fixed $n$, a single choice of anisotropy suffices for all blocks, circumventing rank loss that exchange symmetry may induce in the isotropic setting. 

Once the simple Melnikov zeros are constructed, the Poincar\'e--Pontryagin theorem~\cite{ChristopherLiTorregrosa2024} yields nearby hyperbolic limit cycles. For each fixed $n$, only finitely many zeros are selected, so a single sufficiently small perturbation parameter works uniformly for all such zeros. Zeros lying within the same annulus produce distinct limit cycles, while the disjointness of the period annuli separates limit cycles originating from different annuli.

The Melnikov functions used here are Abelian integrals. Gavrilov~\cite{Gavrilov1999} investigated such integrals for polynomial Hamiltonian perturbations. Binyamini, Novikov, and Yakovenko established constructive upper bounds for their zero counts~\cite{BinyaminiNovikovYakovenko2010}, while Grau, Ma\~nosas, and Villadelprat~\cite{GrauManosasVilladelprat2011} obtained a Chebyshev criterion for families of Abelian integrals. In the present work, the rank argument supplies the independent functions required for local zero realization, and triangular visibility together with hierarchical weights render these local choices compatible.

The remainder of this paper is organized as follows. Section~\ref{sec:hierarchy} establishes the abstract Hierarchical Melnikov Realization theorem. Sections~\ref{sec:chebyshev}--\ref{sec:rank} develop the core technical framework of the Chebyshev construction, including its visibility structure and the key rank verification argument. Section~\ref{sec:main-proof} provides the complete proof of the main Theorem~\ref{thm:main}, while Corollary~\ref{cor:all-degrees} is established and verified in Section~\ref{subsec:all-degrees}. Section~\ref{sec:formalization} further details the implementation, scope, and technical specifications of our Lean 4 formalization.

Our construction is fully formalized in Lean 4. The formal statements rigorously certify the existence of degree-bounded polynomial vector fields admitting finite families of distinct hyperbolic limit cycles, complying with all prescribed degree and counting bounds.

\section{A hierarchical Melnikov realization theorem}\label{sec:hierarchy}

We split the construction into two stages. First, we exploit the dimension of an analytic function space to produce simple positive zeros. We then demonstrate that triangular dependence on the coefficients permits these zeros to be realized simultaneously across multiple cells.

Throughout this section, $h$ is a real variable; in the application
below, it represents the Hamiltonian energy. Let $\mathcal O_0$
denote the real vector space of real-analytic germs at $h=0$.

\begin{lemma}[Rank-to-roots]\label{lem:rank-to-roots}
Let $V\subset\cO_0$ be a real vector space of dimension $m\geq1$.
Choose analytic representatives for a basis on a common neighborhood of
zero, and represent every element of $V$ by the corresponding linear
combination over that neighborhood.  Fix distinct positive numbers $\rho_1,\ldots,\rho_{m-1}$,
which specify the target zero locations after rescaling.
For every sufficiently small scale parameter $\tau>0$,
there exists $G_\tau\in V$ with simple positive zeros
$h_{s,\tau}$ satisfying
\[
 h_{s,\tau}=\tau\rho_s+o(\tau)
 \qquad (1\leq s<m).
\]
For $m=1$, there are no prescribed roots.
\end{lemma}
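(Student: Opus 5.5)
The plan is to reduce $V$ to a space with a triangular basis of monomial-leading germs, and then realize the prescribed roots by a rescaling argument. Since $V$ is a finite-dimensional space of analytic germs, Gaussian elimination on Taylor coefficients gives a basis $g_1,\dots,g_m$ of $V$ with strictly increasing orders $k_1<k_2<\dots<k_m$. Here $g_j(h)=h^{k_j}(1+O(h))$ near $0$. To see this, take the matrix of Taylor coefficients of a basis (finitely many columns suffice, because distinct germs differ at some finite order) and put it in row echelon form; the pivot columns give the distinct orders $k_j$. The analytic representatives live on a common neighborhood $(-\delta,\delta)$, so every linear combination is a single analytic function there. For $m=1$ there is nothing to prove, so assume $m\ge2$.

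Next, I fix the target polynomial profile. Let $p(x)=\prod_{s=1}^{m-1}(x-\rho_s)=\sum_{j=0}^{m-1}a_jx^j$, with $a_{m-1}=1$; it has simple zeros at the distinct $\rho_s$. The naive combination $\sum_j a_j g_{j+1}$ has the wrong shape, because the orders $k_j$ are not consecutive. I therefore use rescaled coefficients instead. Set
\[
G_\tau(h)=\sum_{j=1}^{m}c_j(\tau)\,g_j(h),\qquad c_j(\tau)=a_{j-1}\,\tau^{-k_j}\,\tau^{j-1}.
\]
This is chosen so that, after the substitution $h=\tau x$, we get $c_j(\tau)g_j(\tau x)=a_{j-1}\tau^{j-1}x^{k_j}(1+O(\tau x))$. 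The exponents $x^{k_j}$ are still not of the form $x^{j-1}$. The cleaner route is to work with $\tilde p(x)=\sum_j b_j x^{k_j}$ and choose $b_j$ so that $\tilde p$ has simple zeros exactly at $\rho_1,\dots,\rho_{m-1}$. This is a linear system of $m-1$ equations in $m$ unknowns, $\sum_j b_j\rho_s^{k_j}=0$, so it has a nonzero solution.

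The main obstacle is to make sure this solution $\tilde p$ has these zeros simple and is nontrivial in the required sense. A function $\sum_{j=1}^m b_jx^{k_j}$ with $b\neq0$ is a generalized polynomial; by Descartes' rule of signs (the monomials $x^{k_j}$ form a Chebyshev system on $(0,\infty)$) it has at most $m-1$ positive zeros counted with multiplicity. It vanishes at the $m-1$ distinct points $\rho_s$, so these are all its positive zeros and each is simple. The same bound also shows that the solution space is one-dimensional: if two independent solutions existed, a suitable combination would acquire an extra zero.

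Finally, I take $G_\tau(h)=\sum_j b_j\tau^{-k_j}g_j(h)$. Then
\[
F_\tau(x):=G_\tau(\tau x)=\tilde p(x)+R_\tau(x),
\]
where $R_\tau(x)=\sum_j b_jx^{k_j}O(\tau x)\to0$ in $C^1$ on a compact interval $[\rho_{\min}/2,\,2\rho_{\max}]$. This is valid as soon as $2\tau\rho_{\max}<\delta$, and the $O$-terms can be differentiated because the $g_j$ are analytic. Since $\tilde p(\rho_s)=0$ and $\tilde p'(\rho_s)\neq0$, the implicit function theorem (or a sign-change argument combined with $C^1$-closeness) yields, for small $\tau$, a unique zero $x_{s,\tau}\to\rho_s$ near each $\rho_s$. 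At that zero $F_\tau'(x_{s,\tau})\neq0$. Setting $h_{s,\tau}=\tau x_{s,\tau}=\tau\rho_s+o(\tau)$ gives simple positive zeros of $G_\tau$, and $G_\tau'(h_{s,\tau})=\tau^{-1}F_\tau'(x_{s,\tau})\neq0$. The zeros are distinct because the $\rho_s$ are distinct.
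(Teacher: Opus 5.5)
Your proposal is correct and follows essentially the same route as the paper: an echelon basis with strictly increasing vanishing orders, the unique-up-to-scale generalized polynomial $\sum_j b_j x^{k_j}$ vanishing at the $\rho_s$ (simple zeros via the $m-1$ zero bound, where your Descartes' rule of signs plays the role of the paper's extended Chebyshev/Wronskian argument), and $C^1$ persistence under $h=\tau x$. Your coefficients $b_j\tau^{-k_j}$ differ from the paper's $\tau^{Q-q_j}\eta_j/c_j$ only by the harmless overall factor $\tau^{Q}$, and the abandoned detour through $p(x)=\prod_s(x-\rho_s)$ can simply be deleted.
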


\begin{proof}
If $m=1$, choose any nonzero germ. For $m\geq2$, expand an arbitrary basis of $V$ in Taylor series
at $h=0$. Successive elimination of the lowest-order Taylor terms yields a basis $f_0,\ldots,f_{m-1}$ with distinct orders
of vanishing:
\begin{equation}\label{eq:valuation-basis}
 f_j(h)=c_jh^{q_j}+O(h^{q_j+1}),
 \mbox{ where } c_j\in\mathbb R\setminus\{0\},
\mbox{ and } 0\leq q_0<\cdots<q_{m-1}.
\end{equation}
Here $q_j$ denotes the order of the first nonvanishing Taylor coefficient of $f_j$.
We introduce the rescaled variable $t=h/\tau$ to place the target zeros near the fixed points $\rho_s$. The leading terms of the basis functions then produce the monomials $t^{q_0},\ldots,t^{q_{m-1}}$. These monomials form an extended complete
Chebyshev system on $(0,+\infty)$: every initial Wronskian satisfies
\[
 W(t^{q_0},\ldots,t^{q_j})
 =\left(\prod_{0\leq k<l\leq j}(q_l-q_k)\right)
 t^{\sum_{k=0}^j q_k-j(j+1)/2}, 
\]
and is nonvanishing on this interval. Consequently, the $(m-1)\times m$ evaluation matrix
\[
 \bigl(\rho_s^{q_j}\bigr)_{
   1\leq s\leq m-1,\;0\leq j\leq m-1}
\]
has rank $m-1$. In particular, any nonzero linear combination of these $m$ monomials has at most $m-1$ positive zeros, counted with
multiplicity. Indeed, if its kernel had dimension at least two, then imposing one additional positive evaluation condition would yield a nonzero linear combination admitting $m$ positive zeros, contradicting the Chebyshev bound. This is the standard zero-counting and interpolation principle for extended complete Chebyshev systems (see~\cite{KarlinStudden1966} and for its application to Abelian integrals~\cite[Section~2]{GrauManosasVilladelprat2011}).

Thus, up to scaling, there exists a unique nonzero generalized polynomial
\[
 R(t)=\sum_{j=0}^{m-1}\eta_jt^{q_j}
\]
vanishing at $\rho_1,\ldots,\rho_{m-1}$. Each of these zeros is simple: a multiple
prescribed zero together with the remaining prescribed zeros would
yield total multiplicity at least $m$. Set $Q=q_{m-1}$ and define
\begin{equation}\label{eq:root-concentration}
 G_\tau(h)=\sum_{j=0}^{m-1}\frac{\eta_j}{c_j}
                 \tau^{Q-q_j}f_j(h).
\end{equation}
On every compact subinterval of $(0,+\infty)$ containing the points $\rho_s$,
analyticity together with \eqref{eq:valuation-basis} gives
\[
 \tau^{-Q}G_\tau(\tau t)=R(t)+O_{C^1}(\tau).
\]
Here the remainder term and its first derivative with respect
to $t$ are both bounded by a constant multiple of $\tau$,
uniformly over the chosen compact interval.
Since $R'(\rho_s)\neq0$, this convergence preserves a unique
simple zero near each $\rho_s$. Rescaling back to $h=\tau t$
gives the required zeros.
\end{proof}

We now arrange these local zeros simultaneously across a finite
family of cells. Let $(\cA,\preceq)$ be a finite partially ordered set.
Suppose that a
finite set of cells is decomposed as
\[
 \cC=\bigsqcup_{\alpha\in\cA}\cC_\alpha,
\]
and that a finite-dimensional coefficient space is decomposed as
\[
 E=\bigoplus_{\beta\in\cA}E_\beta.
\]
The set $\mathcal A$ labels the layers. For each
$\alpha\in\mathcal A$, $\mathcal C_\alpha$ is the set of
cells in that layer, and $E_\alpha$ is the corresponding
block of perturbation coefficients.
In the Chebyshev construction below, cells are indexed by
$(i,j)$ and layers by $(k,\ell)$; the layer $(k,\ell)$
consists of the cells indexed by $Y_k\times Y_\ell$.
For each cell $c$, let $\cM_c:E\to\cO_0$ be linear. For a coefficient vector $v\in E$, the function
$\mathcal M_c(v)(h)$ describes the analytic response
in cell $c$. In the application, it is the normalized
first-order Melnikov function.
We impose the following three hypotheses.

\begin{enumerate}
\item[(1)] \emph{Triangular visibility}:
$\cM_c|_{E_\beta}=0$ for $c\in\cC_\alpha$ unless
$\alpha\preceq\beta$. Thus only these coefficient blocks may affect cell $c$.
\item[(2)] \emph{Diagonal replication}: for every $\alpha$ there is a
linear map $K_\alpha:E_\alpha\to\cO_0$ of rank $m_\alpha\geq1$ such that
\[
 \cM_c(v)=\gamma_cK_\alpha(v),\qquad
 c\in\cC_\alpha,\quad v\in E_\alpha,
\]
where $\gamma_c\neq0$. The diagonal responses within a layer therefore
differ only by nonzero scalar factors.
\item[(3)] \emph{Strict weights}: there exists
$w:\mathcal{A}\to\N$ such that $\alpha\prec\beta$ implies
$w(\alpha)<w(\beta)$. These weights will suppress the contributions from
strictly higher visible blocks.
\end{enumerate}

\begin{theorem}[Hierarchical Melnikov realization]
\label{thm:hierarchical}
Under hypotheses \textup{(1)}--\textup{(3)}, there exist vectors
$v_\alpha\in E_\alpha$ such that, for every sufficiently small
$\zeta>0$, the single coefficient vector
\begin{equation}\label{eq:hierarchical-coeff}
 v_\zeta=\sum_{\alpha\in\cA}\zeta^{w(\alpha)}v_\alpha
\end{equation}
has the following property: for each $c\in\cC_\alpha$, the germ
$\cM_c(v_\zeta)$ has at least $m_\alpha-1$ small positive simple
zeros. All cells and all layers are realized by the same $\zeta$.
The total number of simple zeros, counted separately within each cell, is
therefore at least
\begin{equation}\label{eq:hierarchical-count}
 \sum_{\alpha\in\cA}|\cC_\alpha|(m_\alpha-1).
\end{equation}
\end{theorem}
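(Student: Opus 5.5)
Fix for each layer $\alpha$ a finite-dimensional subspace and apply Lemma~\ref{lem:rank-to-roots} to $V_\alpha:=K_\alpha(E_\alpha)\subset\cO_0$, which has dimension $m_\alpha\geq1$. With a fixed small scale $\tau_\alpha>0$, the lemma gives $G_\alpha:=G_{\tau_\alpha}\in V_\alpha$ having $m_\alpha-1$ simple positive zeros $h_{\alpha,s}$ in $(0,\delta)$, for any prescribed $\delta$. Choose $v_\alpha\in E_\alpha$ with $K_\alpha(v_\alpha)=G_\alpha$. Fix a common neighbourhood $[0,2\delta]$ on which all the finitely many germs $\cM_c(v_\beta)$ ($c\in\cC$, $\beta\in\cA$) have analytic representatives. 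Shrinking $\tau_\alpha$ puts all zeros of $G_\alpha$ in the interior of a compact interval $J_\alpha\subset(0,\delta)$ on which $G_\alpha$ has no other zeros. Moreover, $G_\alpha$ changes sign transversally at each $h_{\alpha,s}$. The key point is that the $v_\alpha$ are fixed once and for all before $\zeta$ is chosen.

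Now fix $c\in\cC_\alpha$ and expand by linearity:
\[
 \cM_c(v_\zeta)=\sum_{\beta}\zeta^{w(\beta)}\cM_c(v_\beta).
\]
By hypothesis (1), only $\beta\succeq\alpha$ contribute. The term $\beta=\alpha$ equals $\zeta^{w(\alpha)}\gamma_cG_\alpha$ by hypothesis (2). For $\beta\succ\alpha$, hypothesis (3) gives $w(\beta)\geq w(\alpha)+1$. Dividing by $\zeta^{w(\alpha)}\gamma_c$ yields
\[
 \zeta^{-w(\alpha)}\gamma_c^{-1}\cM_c(v_\zeta)=G_\alpha+\zeta\,R_{c,\zeta},
\]
where $R_{c,\zeta}$ is a finite combination, with coefficients bounded for $\zeta\in(0,1]$, of fixed analytic functions. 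Hence $R_{c,\zeta}$ is bounded in $C^1(J_\alpha)$ uniformly in $\zeta$.

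Around each simple zero $h_{\alpha,s}$, choose a small interval on which $|G_\alpha'|$ is bounded below and $G_\alpha$ has opposite signs at the endpoints. For $\zeta$ small, the $C^1$-small perturbation $G_\alpha+\zeta R_{c,\zeta}$ then has exactly one zero in each such interval, and that zero is simple. These intervals are pairwise disjoint, so the $m_\alpha-1$ zeros are distinct, and all of them lie in $(0,\delta)$. Since a zero of the normalized function is a zero of $\cM_c(v_\zeta)$ with the same multiplicity, this gives the claimed zeros in cell $c$.

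Because $\cC$ and $\cA$ are finite, we can take $\zeta$ smaller than the minimum of the finitely many thresholds, one per cell. This single $\zeta$ works for all cells simultaneously. Summing $m_\alpha-1$ over $c\in\cC_\alpha$ and over $\alpha$ gives \eqref{eq:hierarchical-count}.

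The only delicate point is uniformity. The perturbation $R_{c,\zeta}$ must be controlled in $C^1$ on a fixed compact set away from $h=0$. This holds because the scales $\tau_\alpha$, and hence the $v_\alpha$, are frozen before $\zeta\to0$, so that only finitely many fixed analytic functions ever appear. One must not let $\tau$ depend on $\zeta$; if that were needed, a two-scale argument would be required, but here it is not.
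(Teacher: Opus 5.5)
Your proposal is correct and follows essentially the same route as the paper: fix each $v_\alpha$ once via Lemma~\ref{lem:rank-to-roots} applied to $\operatorname{im}K_\alpha$, then use triangular visibility and the integer weight gap to write $\zeta^{-w(\alpha)}\cM_c(v_\zeta)=\gamma_cK_\alpha(v_\alpha)+O_{C^1}(\zeta)$ on fixed disjoint root neighborhoods, and finally take the minimum of finitely many thresholds. One ordering point: the common interval of analytic representatives should be fixed from a basis of $E$ before the $v_\beta$ are chosen, not after as you wrote, which is how the paper's proof proceeds.
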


\begin{proof}
Because all index sets are finite and all coefficient spaces are finite-dimensional, finitely
many analytic representatives of the relevant germs have a common
interval of definition.  Shrinking that interval if necessary, all
root neighborhoods below lie in one interval $(0,h_*)$.
For each $\alpha$, apply Lemma~\ref{lem:rank-to-roots} to
$\operatorname{im}K_\alpha$ and fix a vector $v_\alpha\in E_\alpha$
whose diagonal germ has $m_\alpha-1$ simple roots in a fixed interval
$(0,h_*)$.  For $c\in\cC_\alpha$, triangular visibility gives
\begin{equation}\label{eq:triangular-expansion}
 \zeta^{-w(\alpha)}\cM_c(v_\zeta)
 =\gamma_cK_\alpha(v_\alpha)
  +\sum_{\beta\succ\alpha}
    \zeta^{w(\beta)-w(\alpha)}\cM_c(v_\beta).
\end{equation}
Choose disjoint closed neighborhoods of the prescribed roots.  On
their union the second term in \eqref{eq:triangular-expansion} is
$O_{C^1}(\zeta)$.  There are only finitely many layers, cells, germs,
and root neighborhoods, so one sufficiently small $\zeta$ works for
all of them.  The $C^1$ stability of simple roots proves
\eqref{eq:hierarchical-count}.
\end{proof}

\begin{corollary}[Balanced tensor layers]\label{cor:tensor}
Suppose $\cA=\{1,\ldots,r\}^2$ equipped with the coordinatewise order,
$|\cC_{k\ell}|=y_ky_\ell$, and $m_{k\ell}=d_kd_\ell$.  Then the lower
bound in \eqref{eq:hierarchical-count} satisfies
\begin{equation}\label{eq:tensor-identity}
 \sum_{k,\ell=1}^r y_ky_\ell(d_kd_\ell-1)
 =\left(\sum_{k=1}^r y_kd_k\right)^2
  -\left(\sum_{k=1}^r y_k\right)^2.
\end{equation}
In particular, if $r=\Theta(\ln n)$ and the balanced-layer estimates
$y_kd_k=\Theta(n)$ and $\sum_ky_k=O(n)$ hold with comparison constants
independent of $n$ and uniform over $1\leq k\leq r$, then the lower bound guaranteed by \eqref{eq:tensor-identity} is of order $n^2\ln^2 n$.
\end{corollary}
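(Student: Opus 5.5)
The plan is to substitute the tensor data directly into the count \eqref{eq:hierarchical-count} of Theorem~\ref{thm:hierarchical} and then factor the resulting double sum. With $|\cC_{k\ell}|=y_ky_\ell$ and $m_{k\ell}=d_kd_\ell$, the bound reads
\[
\sum_{k,\ell=1}^r y_ky_\ell(d_kd_\ell-1)=\sum_{k,\ell}(y_kd_k)(y_\ell d_\ell)-\sum_{k,\ell}y_ky_\ell .
\]
Each double sum factors as a product of two identical single sums, which gives \eqref{eq:tensor-identity} at once. No use of the coordinatewise order is needed for this identity. The order only enters through the hypotheses of Theorem~\ref{thm:hierarchical}, which we take as given; for instance, the weight $w(k,\ell)=k+\ell$ is strictly monotone for the coordinatewise order.

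For the asymptotic statement, let $c_1n\le y_kd_k\le c_2n$ for all $k$, with constants independent of $n$ and $k$. Summing over $k$ gives $c_1rn\le\sum_k y_kd_k\le c_2 rn$, so the first square lies between $c_1^2r^2n^2$ and $c_2^2r^2n^2$. The second square is $O(n^2)$ by the hypothesis $\sum_k y_k=O(n)$.

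Since $r=\Theta(\ln n)$, we have $r^2n^2=\Theta(n^2\ln^2n)$. The subtracted $O(n^2)$ term is therefore of strictly lower order, $o(n^2\ln^2 n)$. So for large $n$ the difference is at least $\tfrac12 c_1^2r^2n^2$ and at most $c_2^2r^2n^2$, which is $\Theta(n^2\ln^2 n)$.

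There is no genuine obstacle. The only point needing care is that the balanced-layer constants are uniform in $k$, so that summing $r$ terms each of order $n$ really yields order $rn$ rather than something smaller. This uniformity is exactly what the hypothesis provides.
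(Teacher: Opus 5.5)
Your proposal is correct and follows the paper's approach: the paper gives no separate argument and treats \eqref{eq:tensor-identity} as the immediate expansion and factorization of the double sum, then applies it directly in \eqref{eq:exact-count}. Your asymptotic step is also what is needed: the constants are uniform in $k$, so $\sum_k y_kd_k=\Theta(rn)$, and because $r=\Theta(\ln n)\to\infty$ the subtracted $O(n^2)$ term is of lower order.
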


\section{The anisotropic Chebyshev center grid}\label{sec:chebyshev}

Let $T_n$ denote the Chebyshev polynomial of the first kind. Fix
$\lambda>1$ and consider
\begin{equation}\label{eq:Hamiltonian}
 H_{n,\lambda}(x,y)=\frac{T_n(x)^2+\lambda T_n(y)^2}{2}.
\end{equation}
Define
\[
 \phi_i=\frac{(2i+1)\pi}{2n},\qquad x_i=\cos\phi_i,
 \qquad 0\leq i<n.
\]
The points $x_i$ are the simple zeros of $T_n$. At the points
$p_{ij}=(x_i,x_j)$,
\[
 D^2H_{n,\lambda}(p_{ij})
 =\begin{pmatrix}T_n'(x_i)^2&0\\
 0&\lambda T_n'(x_j)^2\end{pmatrix}>0,
\]
so the Hamiltonian vector field has $n^2$ nondegenerate centers.

\subsection{Inverse branches and a common energy range}

Set
\[
 I_i=\left(\cos\frac{(i+1)\pi}{n},
            \cos\frac{i\pi}{n}\right),\qquad
 \epsilon_i=(-1)^{i+1}.
\]
The restriction $T_n:I_i\to(-1,1)$ is a diffeomorphism with inverse
\begin{equation}\label{eq:inverse-branch}
 x_i(u)=\cos\left(\phi_i+\epsilon_i\frac{\arcsin u}{n}\right).
\end{equation}
Thus
\begin{align}\label{eq:inverse-mode}
 T_a(x_i(u))
 &=\cos(a\phi_i)\cos\left(\frac an\arcsin u\right)\notag\\
 &\quad-\epsilon_i\sin(a\phi_i)
       \sin\left(\frac an\arcsin u\right).
\end{align}
The second term is odd in $u$.

The local coordinates
\[
 u=T_n(x),\qquad v=T_n(y)
\]
send the sublevel set around $p_{ij}$ to
\begin{equation}\label{eq:ellipse}
 u^2+\lambda v^2\leq2h.
\end{equation}
Fix $0<h_*<1/2$.  If $0<h<h_*$, then $|u|<1$ and
$|v|<\lambda^{-1/2}<1$; hence the same $h_*$ works in every cell and
for every $\lambda>1$.  We call $I_i\times I_j$ the cell containing
$p_{ij}$.  The level curves with $0<h<h_*$ form a period annulus inside
this cell.

\subsection{Polynomial perturbation and the first Melnikov function}

We compute the leading coefficient of the displacement map for a polynomial perturbation of the Hamiltonian field. This amounts to the classical Poincar\'e--Pontryagin--Melnikov calculation on a period
annulus (see
\cite{Roussarie1998,HanYu2012,Gavrilov1999,
ChristopherLiTorregrosa2024}).

Select a finite Chebyshev density
\begin{equation}\label{eq:density}
 S(x,y)=\sum_{a,b}c_{ab}T_a(x)T_b(y),
\end{equation}
and choose the perturbation so that the Jacobian of the Chebyshev change of coordinates cancels out in the area integral. Set
\begin{equation}\label{eq:perturbation-P}
 q=T_n'(x)T_n'(y)S,\qquad
 P(x,y)=\int_0^xq(s,y)\,ds,
\end{equation}
and perturb the Hamiltonian field via
\begin{equation}\label{eq:field}
 \dot x=(H_{n,\lambda})_y+\mu P,\qquad
 \dot y=-(H_{n,\lambda})_x.
\end{equation}
The function $P$ is a real polynomial and $P_x=q$.

Along the unperturbed orbit, $H_x\,dt=-dy$, so the first energy
displacement is $-\oint P\,dy$.  Green's formula and the local change
of variables combine their orientation signs into a cell-dependent
$\sigma_{ij}\in\{\pm1\}$ and yield the exact formula
\begin{equation}\label{eq:Melnikov-area}
 M_{ij}(h)=\sigma_{ij}
 \iint_{u^2+\lambda v^2\leq2h}
 S(x_i(u),x_j(v))\,du\,dv.
\end{equation}
Indeed, $q$ cancels the absolute Jacobian
$|T_n'(x)T_n'(y)|^{-1}$, and the remaining sign is constant on each
cell.  By reflection symmetry of the ellipse in each coordinate, the
terms that are odd in $u$ or $v$ integrate to zero.  Thus the mode
$(a,b)$ carries precisely the discrete-cosine weight
\begin{equation}\label{eq:DCT-weight}
 \cos(a\phi_i)\cos(b\phi_j).
\end{equation}

\subsection{The exact analytic kernel}

For $\alpha\in\R$, define
\begin{equation}\label{eq:U-F}
 U_p(\alpha)=\frac{(-\alpha/2)_p(\alpha/2)_p}{p!}.
\end{equation}
We obtain the analytic kernel from the expansion
\[
 \cos(\alpha\arcsin u)
 =\sum_{p\geq0}
 \frac{(-\alpha/2)_p(\alpha/2)_p}{(1/2)_p\,p!}u^{2p}.
\]
For any $0<h_0<h_*<1/2$, the ellipses with $0<h\leq h_0$ satisfy
$|u|,|v|\leq\sqrt{2h_0}<1$.  Both cosine series converge absolutely
and uniformly on this compact region, so their product may be integrated
term by term.  After scaling to a fixed disk and removing the area factor,
only integer powers of $h$ remain; the resulting power series converges
absolutely for $|h|<h_0$ and defines an analytic germ at zero.
The exact disk-moment identity is
\begin{equation}\label{eq:disk-moment}
 \iint_{u^2+w^2\leq2h}u^{2p}w^{2q}\,du\,dw
 =\frac{\pi(2h)^{p+q+1}(1/2)_p(1/2)_q}{(p+q+1)!}.
\end{equation}
Indeed, polar coordinates separate the radial factor
$\int_0^{\sqrt{2h}}\rho^{2p+2q+1}\,d\rho$ from the angular beta
integral
$\int_0^{2\pi}\cos^{2p}\theta\sin^{2q}\theta\,d\theta$; evaluating
the two factors gives exactly \eqref{eq:disk-moment}.
Combining the cosine expansion and the disk-moment identity with the substitution
$w=\sqrt\lambda\,v$ in \eqref{eq:ellipse}, and then dividing by the
area factor $2\pi h/\sqrt\lambda$, gives the convergent germ below.
The factors $(1/2)_p$ and $(1/2)_q$ in \eqref{eq:disk-moment} cancel
those in the two cosine expansions:
\begin{equation}\label{eq:analytic-kernel}
 \widehat F_{a,b}^{(n,\lambda)}(h)
 =\sum_{p,q\geq0}\frac{(2h)^{p+q}}{(p+q+1)!}
 U_p(a/n)U_q(b/n)\lambda^{-q}.
\end{equation}
To study the rank of this kernel, define the formal series
\begin{equation}\label{eq:formal-series}
 \cF_\alpha(t)=\sum_{p\geq0}U_p(\alpha)t^p.
\end{equation}
This generally divergent ${}_2F_0$ series is used only to record Taylor
coefficients.  After division of Taylor row $m$ by the nonzero factor
$2^m/(m+1)!$, that row is exactly
\begin{equation}\label{eq:formal-product-row}
 [t^m]\,\cF_{a/n}(t)\cF_{b/n}(t/\lambda).
\end{equation}
The full first Melnikov function is
\begin{equation}\label{eq:full-Melnikov}
 M_{ij}(h)=\sigma_{ij}\frac{2\pi h}{\sqrt\lambda}
 \sum_{a,b}c_{ab}\cos(a\phi_i)\cos(b\phi_j)
 \widehat F_{a,b}^{(n,\lambda)}(h).
\end{equation}
If a normalized combination $\widehat M$ has a simple zero $h_0>0$,
then $h\widehat M$ has the same simple zero because
$(h\widehat M)'(h_0)=h_0\widehat M'(h_0)$.
The dependence on the cell is confined to the cosine weights and the
orientation sign.  We next organize these weights into layers, and then
prove the required independence of the analytic kernels.

\section{The \texorpdfstring{$3$}{3}-adic visibility filtration}

From now on, let $n=3^r$ and write $[n]=\{0, 1,2,\ldots,n-1\}$. We construct row layers $Y_k$ and frequency bands $W_k$ so that
$\cos(a\phi_i)=0$ whenever $i\in Y_k$, $a\in W_p$, and $p<k$.

We partition the noncentral row indices into layers according
to the largest power of $3$ dividing $2i+1$. 
Set $X_0=[n]$ and, for $1\leq k\leq r$, define
\begin{equation}\label{eq:row-layers}
 X_k=\{i\in[n]:3^k\mid {2i+1}\},\qquad
 Y_k=X_{k-1}\setminus X_k.
\end{equation}
Thus $i\in Y_k$ precisely when $3^{k-1}$ divides $2i+1$
but $3^k$ does not.

For frequencies, let $Z_k$ consist of the positive odd multiples
of $n/3^k$ below $n$, and let $W_k$ contain the frequencies
added at step $k$:
\begin{align}
 Z_k&=\left\{\frac{mn}{3^k}:1\leq m<3^k,\ 
                   m\text{ is odd}\right\},
 \qquad Z_0=\varnothing,
 \label{eq:Zk}\\
 W_k&=Z_k\setminus Z_{k-1}
 =\left\{\frac{mn}{3^k}:1\leq m<3^k,\ 
             m\text{ is odd},\ 3\nmid m\right\}.
 \label{eq:Wk}
\end{align}
Elementary counting gives
\begin{equation}\label{eq:balanced-layers}
 y_k:=|Y_k|=\frac{2n}{3^k},\qquad
 d_k:=|W_k|=3^{k-1},\qquad
 y_kd_k=\frac{2n}{3}.
\end{equation}
Each successive layer has one third as many rows and three times
as many frequencies, so the product $y_kd_k$ is independent of $k$.
Finally, $X_r=\{(n-1)/2\}$ consists of the central row index, and hence
\begin{equation}\label{eq:row-sum}
 \sum_{k=1}^ry_k=n-1.
\end{equation}

\begin{lemma}[Visibility and diagonal replication]
\label{lem:visibility}
If $i\in Y_k$ and $a\in W_p$ with $p<k$, then
$\cos(a\phi_i)=0$.  On the diagonal layer $p=k$, the values
$\cos(a\phi_i)$ factor into a nonzero row sign and a nonzero frequency
sign.  Hence a coefficient block supported on $W_k\times W_\ell$ is
invisible on $Y_p\times Y_q$ unless $p\leq k$ and $q\leq\ell$, and on
$Y_k\times Y_\ell$ it induces the same analytic germ in every cell up
to a nonzero scalar.
\end{lemma}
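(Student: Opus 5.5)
The proof reduces to elementary $3$-adic arithmetic on the product $a\phi_i$. For $a\in W_p$ we write $a=mn/3^p=m\,3^{r-p}$ with $m$ odd and $3\nmid m$. Then
\[
 a\phi_i=\frac{m(2i+1)\pi}{2\cdot 3^p}.
\]
For $i\in Y_k$ we write $2i+1=3^{k-1}s$ with $s$ odd and $3\nmid s$.

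\textbf{Vanishing for $p<k$.} If $p<k$, then $p\le k-1$, so
\[
 a\phi_i=\frac{m s\,3^{k-1-p}\pi}{2}.
\]
The integer $ms\,3^{k-1-p}$ is odd, since $m$, $s$ and powers of $3$ are all odd. Hence $a\phi_i$ is an odd multiple of $\pi/2$ and its cosine vanishes.

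\textbf{Nonvanishing and factorization for $p=k$.} If $p=k$, then
\[
 a\phi_i=\frac{ms\pi}{6}, \qquad ms\ \text{odd}, \quad 3\nmid ms.
\]
Modulo $12$, $ms\in\{1,5,7,11\}$, so $\cos(a\phi_i)=\pm\sqrt3/2\neq0$. For the factorization, note that $ms\bmod 12$ is determined by the residues $m\bmod 12$ and $s\bmod 12$, which both lie in the group $\{1,5,7,11\}\cong(\mathbb Z/2)^2$. The function $\chi(x)=\operatorname{sgn}\cos(x\pi/6)$ on this group takes the values $1\mapsto+$, $5\mapsto-$, $7\mapsto-$, $11\mapsto+$. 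This $\chi$ is the character that is trivial on $\{1,11\}$, so it is multiplicative. Therefore
\[
 \cos(a\phi_i)=\frac{\sqrt3}{2}\,\chi(m)\chi(s),
\]
which is a nonzero row sign $\chi(s)$ times a nonzero frequency sign $\chi(m)$, with the constant $\sqrt3/2$ absorbed into either factor. The only step needing care is checking that $\chi$ is a character: the product table on $\{1,5,7,11\}$ modulo $12$ has every element squaring to $1$, and $5\cdot7\equiv11$, which is consistent with $\chi(5)\chi(7)=\chi(11)$.

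\textbf{Consequences for coefficient blocks.} Let the block be supported on $W_k\times W_\ell$ and let the cell $(i,j)$ lie in $Y_p\times Y_q$. By \eqref{eq:full-Melnikov}, each mode $(a,b)$ contributes with weight $\cos(a\phi_i)\cos(b\phi_j)$. This weight vanishes when $k<p$ by the first part applied to $i$, and when $\ell<q$ by the first part applied to $j$. So the block is invisible unless $p\le k$ and $q\le\ell$.

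On the diagonal layer $p=k$, $q=\ell$, the factorization gives
\[
 \cos(a\phi_i)\cos(b\phi_j)=\tfrac34\,\chi(s_i)\chi(s_j)\,\chi(m_a)\chi(m_b).
\]
The normalized Melnikov germ is therefore
\[
 \sigma_{ij}\chi(s_i)\chi(s_j)\cdot\tfrac34\sum_{a,b}c_{ab}\chi(m_a)\chi(m_b)\widehat F^{(n,\lambda)}_{a,b}(h).
\]
The sum does not depend on the cell, so every cell in the layer sees the same analytic germ $K_{k\ell}$ multiplied by the nonzero scalar $\gamma_{ij}=\sigma_{ij}\chi(s_i)\chi(s_j)$. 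This is exactly hypothesis (2).
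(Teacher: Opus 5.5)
Your proof is correct and follows the paper's argument: the same $3$-adic computation of $a\phi_i$, the same mod-$12$ character $\chi$ giving $\cos(a\phi_i)=\frac{\sqrt3}{2}\chi(m)\chi(s)$, and the same absorption of the signs $\chi(s_i)\chi(s_j)$ and $\sigma_{ij}$ into the nonzero scalar $\gamma_{ij}$. The differences are cosmetic. You exhibit the odd multiple of $\pi/2$ directly, whereas the paper goes through the nesting $W_p\subset Z_{k-1}$. You also spell out why $\chi$ is a character of $(\mathbb Z/12)^\times$ and how the block statements follow from \eqref{eq:full-Melnikov}, where the paper leaves these as quick checks.
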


\begin{proof}
If $i\in X_k$ and $a=mn/3^k\in Z_k$, write $2i+1=3^ku$.
Then $a\phi_i=mu\pi/2$ , where $m$ and $u$ are odd, so its cosine is zero.  Since
$W_p\subset Z_{k-1}$ for $p<k$, this proves invisibility on $Y_k$.

For $i\in Y_k$ and $a=mn/3^k\in W_k$, write
\[
 2i+1=3^{k-1}u,\qquad u\text{ is odd},\quad 3\nmid u.
\]
Then
\begin{equation}\label{eq:mod12-cosine}
 \cos(a\phi_i)=\cos\frac{mu\pi}{6}.
\end{equation}
For a unit $s$ modulo $12$, define
\[
 \chi(s)=\frac{2}{\sqrt3}\cos\frac{s\pi}{6}\in\{\pm1\}.
\]
A check on the four residue classes $1,5,7,11$ shows that
$\chi(mu)=\chi(m)\chi(u)$.  Thus
\begin{equation}\label{eq:character-factor}
 \cos(a\phi_i)=\frac{\sqrt3}{2}\chi(m)\chi(u).
\end{equation}
The factor $\chi(m)$ can be absorbed into the frequency coefficient,
while $\chi(u)$ is a single row scalar.  Taking products in the two
coordinates, and absorbing also the orientation sign in
\eqref{eq:Melnikov-area}, proves the diagonal replication statement.
\end{proof}

This verifies the visibility and replication structure required by
Theorem~\ref{thm:hierarchical}.  It remains to determine the rank of each
diagonal block.

\section{Full rank of the anisotropic blocks}\label{sec:rank}

To prove full rank of the blocks, we first establish independence of
shifted formal ${}_2F_0$ series.  These will be the limiting columns
after a change of basis as the anisotropy tends to infinity.

\begin{lemma}[Borel--Gauss shift independence]
\label{lem:borel-gauss}
Let $0<\alpha_1<\cdots<\alpha_d<1$.  For every integer $e\geq1$, the
formal series
\begin{equation}\label{eq:shift-family}
 \{t^q\cF_{\alpha_i}(t):1\leq i\leq d,\ 0\leq q<e\}
\end{equation}
are linearly independent.
\end{lemma}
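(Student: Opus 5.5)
The plan is to turn a formal linear relation into an analytic identity via the Borel transform, and then read off its coefficients from the asymptotics at $s\to-\infty$. Suppose that
\[
 \sum_{i=1}^d\sum_{q=0}^{e-1}c_{iq}\,t^q\cF_{\alpha_i}(t)=0
\]
holds as formal power series. I apply the Borel transform $\mathcal B\bigl(\sum a_pt^p\bigr)=\sum a_p s^p/p!$. Since $U_p(\alpha)/p!=(-\alpha/2)_p(\alpha/2)_p/\bigl((1)_p\,p!\bigr)$, we get
\[
 g_\alpha(s):=\mathcal B(\cF_\alpha)(s)={}_2F_1(-\alpha/2,\alpha/2;1;s),
\]
which converges for $|s|<1$ and continues analytically to $\mathbb C\setminus[1,\infty)$. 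Multiplication by $t$ corresponds to the integration operator $f\mapsto\int_0^sf$. Applying $D=d/ds$ exactly $e-1$ times to the transformed relation therefore gives
\[
 \sum_{i=1}^d P_i(D)g_{\alpha_i}=0,\qquad P_i(D)=\sum_{q=0}^{e-1}c_{iq}D^{e-1-q}.
\]
This identity holds near $0$, and by analytic continuation it holds on $(-\infty,0]$. No information is lost: the integrals vanish at $0$ to the appropriate orders, so recovering the original relation from the differentiated one is exact.

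Next I invoke Gauss's connection formula at infinity, with $a=-\alpha/2$, $b=\alpha/2$, $c=1$. Because $b-a=\alpha\notin\mathbb Z$, the formula has no logarithms. For $s\to-\infty$ it gives
\[
 g_\alpha(s)=A_+(\alpha)(-s)^{\alpha/2}\Phi_+(1/s)+A_-(\alpha)(-s)^{-\alpha/2}\Phi_-(1/s),
\]
where $\Phi_\pm$ are convergent power series with constant term $1$ (for $|s|>1$), and
\[
 A_\pm(\alpha)=\frac{\Gamma(\pm\alpha)}{\Gamma(\pm\alpha/2)\Gamma(1\pm\alpha/2)}.
\]
For $0<\alpha<1$ the coefficient $A_+(\alpha)$ is nonzero, since its numerator is finite and nonzero and its denominator is finite. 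The function $D^jg_\alpha$ has the same structure, with the power $(-s)^{\alpha/2}$ replaced by $(-s)^{\alpha/2-j}$ and a leading coefficient equal to $A_+(\alpha)$ times a nonzero falling factorial in $\alpha/2$. That factorial is nonzero because $\alpha/2\notin\mathbb Z$.

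The exponent classes modulo $\mathbb Z$ that occur are $\pm\alpha_i/2$. These $2d$ classes are pairwise distinct:
\begin{itemize}
 \item $\alpha_i/2\in(0,1/2)$, and these values are distinct because the $\alpha_i$ are;
 \item $-\alpha_i/2\in(-1/2,0)$, distinct for the same reason;
 \item $\alpha_i/2\equiv-\alpha_j/2$ would force $(\alpha_i+\alpha_j)/2\in\mathbb Z$, which is impossible since $0<(\alpha_i+\alpha_j)/2<1$.
\end{itemize}
By uniqueness of convergent expansions of the form $\sum_\theta(-s)^\theta\Psi_\theta(1/s)$, with $\theta$ ranging over distinct classes mod $\mathbb Z$ and $\Psi_\theta$ convergent Laurent series in $1/s$ with finite principal part (monodromy around $\infty$ separates the classes), each class component must vanish separately. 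In particular, for every $i$,
\[
 P_i(D)\bigl[(-s)^{\alpha_i/2}\Phi_+(1/s)\bigr]=0.
\]

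Finally I conclude by a leading-term induction. If $P_i\neq0$, let $j_0$ be the smallest $j$ such that $D^j$ appears in $P_i$ with a nonzero coefficient. That term contributes $c\cdot(\text{nonzero})\cdot(-s)^{\alpha_i/2-j_0}$. Every other contribution, from higher derivatives or from lower-order terms of $\Phi_+$, has exponent at most $\alpha_i/2-j_0-1$. The leading coefficient is therefore forced to vanish, a contradiction. Hence all $c_{iq}=0$.

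The main obstacle I expect is making the connection-formula step rigorous: checking that $A_+$ is nonzero throughout $(0,1)$, and justifying the uniqueness of the class decomposition at infinity. For the latter I would argue via monodromy: continuing around a large circle multiplies the class-$\theta$ part by $e^{2\pi i\theta}$, and these multipliers are distinct for the $2d$ classes, so a Vandermonde-type argument shows each part vanishes.
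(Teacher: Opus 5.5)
Your proof is correct and follows the paper's route: the Borel transform to ${}_2F_1(-\alpha/2,\alpha/2;1;s)$, differentiation to clear the iterated integrals, continuation to the slit plane, and the Gauss connection formula at infinity with the same nonzero coefficient $A_+(\alpha)$. The only difference is the final separation step: you first split the relation into the exponent classes $\pm\alpha_i/2$ modulo $\mathbb Z$ by monodromy and then argue by leading terms for each $i$, whereas the paper notes that the leading exponents $q-Q+\alpha_i/2$ are already pairwise distinct reals and lets the unique largest one dominate every other term on the negative real axis, which needs neither monodromy nor the $-\alpha_i/2$ classes.
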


\begin{proof}
We apply the Borel transform to turn a formal relation into an identity
between analytic functions.  Their asymptotics on the negative real axis
will then separate the terms of the relation.
Suppose there is a nontrivial relation
\begin{equation}\label{eq:formal-relation}
 \sum_{i,q}c_{i,q}t^q\cF_{\alpha_i}(t)=0.
\end{equation}
Use the standard Borel transform
\[
 \mathcal B\left(\sum_{m\geq0}b_mt^m\right)
 =\sum_{m\geq0}\frac{b_m}{m!}s^m.
\]
Coefficientwise,
\begin{equation}\label{eq:Borel-image}
 G_\alpha(s):=\mathcal B\cF_\alpha
 ={}_2F_1\left(-\frac\alpha2,\frac\alpha2;1;s\right),
 \qquad \mathcal B(t^q\cF_\alpha)=I^qG_\alpha,
\end{equation}
where $If(s)=\int_0^sf(u)\,du$ and the hypergeometric convention is
the one in \cite{DLMF15.2}.  Let
$Q=\max\{q:c_{i,q}\neq0\}$.  Differentiating the Borel image of
\eqref{eq:formal-relation} exactly $Q$ times removes all iterated
integrals and gives
\begin{equation}\label{eq:differentiated-Borel-relation}
 \sum_{i,q}c_{i,q}G_{\alpha_i}^{(Q-q)}(s)=0.
\end{equation}
The identity first holds on $|s|<1$, where the Borel series
converge.  Take the principal analytic continuation of each $G_\alpha$
to the common connected slit domain
$D=\mathbb C\setminus[1,+\infty)$ \cite{DLMF15.2}.
Every derivative in \eqref{eq:differentiated-Borel-relation} is
holomorphic on $D$.  Their finite linear combination vanishes near
zero, so it vanishes throughout $D$ by the identity theorem.
In particular, \eqref{eq:differentiated-Borel-relation} holds on the
entire negative real axis.

We next examine this identity on the negative real axis.  Set $s=-x$,
where $x>1$.  The Gauss connection formula
\cite[Eq.~15.8.2]{DLMF15.8} specializes to
\begin{equation}\label{eq:Gauss-connection}
 G_\alpha(-x)
 =A_\alpha x^{\alpha/2}\Phi_{\alpha,+}(-x^{-1})
  +B_\alpha x^{-\alpha/2}\Phi_{\alpha,-}(-x^{-1}),
\end{equation}
where
\begin{align}
 A_\alpha&=\frac{\Gamma(\alpha)}
 {\Gamma(\alpha/2)\Gamma(1+\alpha/2)},
 &
 B_\alpha&=\frac{\Gamma(-\alpha)}
 {\Gamma(-\alpha/2)\Gamma(1-\alpha/2)},
 \label{eq:Gauss-coefficients}\\
 \Phi_{\alpha,+}(z)
 &={}_2F_1\left(-\frac\alpha2,-\frac\alpha2;1-\alpha;z\right),
 &
 \Phi_{\alpha,-}(z)
 &={}_2F_1\left(\frac\alpha2,\frac\alpha2;1+\alpha;z\right).
 \label{eq:Gauss-factors}
\end{align}
The conversion from the NIST Digital Library of
Mathematical Functions (DLMF)~\cite[Section~15.2]{DLMF15.2} normalization to these coefficients is
recorded in the remark below.  With the DLMF variable $z=-x$, one has
$-z=x>0$, so the principal powers in the connection formula are the
positive real powers displayed above.
Here the two exponent differences are $\alpha$ and $-\alpha$, hence
are nonintegral, and $-s=x$ lies on the positive real axis.  Thus no
resonant logarithm or phase factor occurs in this specialization.
Because $0<\alpha<1$, both coefficients in
\eqref{eq:Gauss-coefficients} are finite and nonzero.  The two
hypergeometric factors in \eqref{eq:Gauss-factors} are analytic at
zero and equal one there.  Thus \eqref{eq:Gauss-connection} is a
convergent expansion in $x^{-1}$ and can be differentiated any fixed
finite number of times.

For $r_0=Q-q$, differentiation with respect to $s$ and evaluation at
$s=-x$ gives
\begin{equation}\label{eq:Gauss-derivative}
 G_\alpha^{(r_0)}(-x)
 =(-1)^{r_0}A_\alpha
 \left(\frac\alpha2\right)_{\!\underline{r_0}}
 x^{\alpha/2-r_0}\bigl(1+O(x^{-1})\bigr)
 +O(x^{-\alpha/2-r_0}),
\end{equation}
where the falling factorial is nonzero.  The leading exponent
attached to the nonzero coefficient $c_{i,q}$ is
\begin{equation}\label{eq:leading-exponents}
 E_{i,q}=q-Q+\frac{\alpha_i}{2}.
\end{equation}
It remains to compare the leading exponents in the relation.  They are
pairwise distinct: equality of two of
them would give
\[
 q-q'=\frac{\alpha_j-\alpha_i}{2};
\]
the left side is an integer and the right side lies in
$(-1/2,1/2)$, so both sides vanish.  Select the unique largest
$E_{i,q}$.  Its leading term strictly dominates the leading terms of all
other summands, their lower-power branches, and every $x^{-1}$ correction.
Dividing \eqref{eq:differentiated-Borel-relation} by the corresponding
power of $x$ and taking $x\to+\infty$ forces a nonzero leading
coefficient to vanish, a contradiction.
\end{proof}

\begin{remark*}[Normalization in the Gauss connection formula]
The DLMF displays Olver's normalized hypergeometric function
$\mathbf F(a,b;c;z)={}_2F_1(a,b;c;z)/\Gamma(c)$ in its connection
formula \cite[Eq.~15.8.2]{DLMF15.8}.
The left-hand normalization is trivial because $c=1$, but the two
right-hand functions have third parameters $1-\alpha$ and $1+\alpha$.
Substituting $a=-\alpha/2$, $b=\alpha/2$, and $c=1$ into that formula
therefore gives the coefficients
\[
 \frac{\pi}
 {\sin(\pi\alpha)\Gamma(\alpha/2)\Gamma(1+\alpha/2)
  \Gamma(1-\alpha)},\qquad
 -\frac{\pi}
 {\sin(\pi\alpha)\Gamma(-\alpha/2)\Gamma(1-\alpha/2)
  \Gamma(1+\alpha)}
\]
for the ordinary hypergeometric functions $\Phi_{\alpha,+}$ and
$\Phi_{\alpha,-}$, respectively.  Euler's reflection formula gives
\[
 \frac{\pi}{\sin(\pi\alpha)\Gamma(1-\alpha)}=\Gamma(\alpha),
 \qquad
 -\frac{\pi}{\sin(\pi\alpha)\Gamma(1+\alpha)}=\Gamma(-\alpha),
\]
which yields exactly \eqref{eq:Gauss-coefficients} in the ordinary
convention of \eqref{eq:Borel-image}.
\end{remark*}

\begin{proposition}[Anisotropic Cartesian rank]\label{prop:block-rank}
Let $A,B$ be finite subsets of $(0,1)$.  There is a finite
$\Lambda(A,B)$ such that, for every $\lambda>\Lambda(A,B)$, the formal
series
\begin{equation}\label{eq:product-family}
 \{\cF_\alpha(t)\cF_\beta(t/\lambda):
       (\alpha,\beta)\in A\times B\}
\end{equation}
have rank $|A|\cdot |B|$.  The corresponding convergent analytic germs in
\eqref{eq:analytic-kernel} have the same rank.
\end{proposition}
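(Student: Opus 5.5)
The plan is to reduce the statement to Lemma~\ref{lem:borel-gauss} by an invertible change of columns that becomes, as $\lambda\to\infty$, the shifted family \eqref{eq:shift-family}, and then to pass from $\lambda=\infty$ to large finite $\lambda$ using a finite minor. Write $A=\{\alpha_1<\cdots<\alpha_d\}$ and $B=\{\beta_1,\ldots,\beta_e\}$. Expanding the second factor gives
\[
 \cF_\alpha(t)\cF_\beta(t/\lambda)=\sum_{q\geq0}\lambda^{-q}U_q(\beta)\,t^q\cF_\alpha(t),
\]
and each Taylor coefficient $[t^m]$ of this product involves only the finitely many terms with $q\leq m$. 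Hence every coefficient is a polynomial in $\lambda^{-1}$.

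\textbf{Step 1 (Vandermonde structure in $\beta$).} Note that
\[
 U_q(\beta)=\frac{1}{q!}\prod_{k=0}^{q-1}\Bigl(k^2-\frac{\beta^2}{4}\Bigr),
\]
which is a polynomial in $\beta^2$ of exact degree $q$. The elements $\beta_j^2$ are distinct because $\beta_j\in(0,1)$. Therefore the $e\times e$ matrix $U=(U_q(\beta_j))_{0\leq q<e,\,1\leq j\leq e}$ is a triangular change of basis of a Vandermonde matrix, and it is invertible. Let $N=U^{-1}$.

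For each $\alpha\in A$ and each $0\leq q<e$, form the combination
\[
 \Psi_{\alpha,q}^{\lambda}:=\lambda^{q}\sum_j N_{jq}\,\cF_\alpha(t)\cF_{\beta_j}(t/\lambda)
 = t^q\cF_\alpha(t)+\sum_{q'\geq e}\lambda^{q-q'}\kappa_{q,q'}\,t^{q'}\cF_\alpha(t),
\]
with constants $\kappa_{q,q'}$ independent of $\lambda$. For fixed $\lambda$, this is an invertible column operation, block diagonal in $\alpha$. It therefore preserves rank.

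\textbf{Step 2 (limit and finite minor).} As $\lambda\to\infty$, every Taylor coefficient of $\Psi^\lambda_{\alpha,q}$ converges to the corresponding coefficient of $t^q\cF_\alpha$, since $q-q'<0$. By Lemma~\ref{lem:borel-gauss}, the $de$ limit series are linearly independent. A finite family of formal series is independent exactly when some truncation to rows $m\leq M$ has a nonzero $de\times de$ minor. Fix such rows. The corresponding minor of the $\Psi^\lambda$ family is a polynomial in $\lambda^{-1}$ whose value at $\lambda^{-1}=0$ is nonzero. This polynomial has finitely many roots, so it is nonzero for all $\lambda$ beyond some finite $\Lambda(A,B)$. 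That gives rank $|A||B|$ for the $\Psi$ family, and hence, undoing the invertible operations, for \eqref{eq:product-family}.

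\textbf{Step 3 (convergent germs).} By \eqref{eq:formal-product-row}, the Taylor row $m$ of $\widehat F^{(n,\lambda)}_{a,b}$ equals row $m$ of the formal product multiplied by the nonzero factor $2^m/(m+1)!$, with $\alpha=a/n$ and $\beta=b/n$. Multiplying rows by nonzero scalars does not change rank, and two germs are equal if and only if their Taylor series are equal. So the analytic germs have the same rank.

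The delicate point is Step~2. There, one must ensure that the change of columns involves only finitely many columns at each $\lambda$, and that the tail $\sum_{q'\ge e}$ contributes to each fixed row only finitely many terms, all with negative powers of $\lambda$. Once this is checked, reducing to a single polynomial minor handles uniformity in $\lambda$ automatically, and the genuinely hard input is Lemma~\ref{lem:borel-gauss}, which is already available.
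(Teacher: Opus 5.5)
Your proposal is correct and matches the paper's proof step for step. You use the same invertible Vandermonde-type column change $\lambda^q\sum_j N_{jq}\,\cF_\alpha(t)\cF_{\beta_j}(t/\lambda)$, the same coefficientwise limit to $t^q\cF_\alpha(t)$, a nonzero finite minor obtained from Lemma~\ref{lem:borel-gauss}, and the nonzero row factors $2^m/(m+1)!$ to pass to the convergent germs. Writing the minor as a polynomial in $\lambda^{-1}$ with nonzero constant term is just a slightly sharper form of the paper's continuity step; it also shows full rank can fail for at most finitely many $\lambda$.
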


\begin{proof}
We change columns to isolate successive Taylor orders in the second
factor.  Write $A=\{\alpha_1,\ldots,\alpha_d\}$ and
$B=\{\beta_1,\ldots,\beta_e\}$, and set
\[
 V_{mj}=U_m(\beta_j),\qquad 0\leq m<e,\quad1\leq j\leq e.
\]
Since $U_m(\beta)$ is a polynomial of exact degree $m$ in $\beta^2$
with nonzero leading coefficient, $V$ is an invertible polynomial
Vandermonde matrix.  For
\[
 \mathcal H_{ij}^{(\lambda)}(t)
 =\cF_{\alpha_i}(t)\cF_{\beta_j}(t/\lambda)
\]
define, for $0\leq q<e$, the transformed columns
\begin{equation}\label{eq:column-transform}
 \widetilde{\mathcal H}_{i,q}^{(\lambda)}(t)
 =\lambda^q\sum_{j=1}^e
 \mathcal H_{ij}^{(\lambda)}(t)(V^{-1})_{jq}.
\end{equation}
Because
\[
 \mathcal H_{ij}^{(\lambda)}(t)
 =\sum_{m\geq0}U_m(\beta_j)\lambda^{-m}t^m
   \cF_{\alpha_i}(t),
\]
the identities
\[
 \sum_jU_m(\beta_j)(V^{-1})_{jq}=\delta_{mq}
 \qquad(0\leq m<e)
\]
show coefficientwise that
\begin{equation}\label{eq:column-limit}
 \widetilde{\mathcal H}_{i,q}^{(\lambda)}(t)
 \longrightarrow t^q\cF_{\alpha_i}(t)
 \qquad(\lambda\to\infty).
\end{equation}
Lemma~\ref{lem:borel-gauss} implies that the limit columns are
independent.  Hence some finite Taylor minor of size $|A||B|$ is
nonzero.  The same minor of the transformed columns remains nonzero
for all sufficiently large $\lambda$.  The transformation
\eqref{eq:column-transform} is invertible, so the original columns
have full rank.  Finally, \eqref{eq:formal-product-row} differs from
the Taylor matrix of the convergent kernels only by nonzero row
factors, proving the last assertion.
\end{proof}

We can now choose the anisotropy for all blocks at once.  For fixed
$n$, there are only finitely many blocks
$W_k\times W_\ell$.  Taking the maximum of the thresholds in
Proposition~\ref{prop:block-rank}, and increasing it if needed, gives
a common $\lambda_n>1$ for which every block has rank $d_kd_\ell$.
The choice of $\lambda_n$ affects only the coefficients and leaves
the polynomial degree unchanged.

\section{Proof of the main theorem}\label{sec:main-proof}

We choose the parameters in the following order, fixing each before
choosing the next:
\begin{equation}\label{eq:parameter-order}
 n\longrightarrow\lambda_n
 \longrightarrow\{S_{k\ell},\text{ root scales and intervals}\}
 \longrightarrow\zeta\longrightarrow\mu.
\end{equation}

\subsection{Simultaneous realization of Melnikov zeros}

Fix $r\geq2$, let $n=3^r$, and choose the common $\lambda_n$ above.
Use the coordinatewise order on
\[
 \cA=\{1,\ldots,r\}^2
\]
and set
\[
 \cC_{k\ell}=Y_k\times Y_\ell,\qquad
 E_{k\ell}=\Span\{T_a(x)T_b(y):(a,b)\in W_k\times W_\ell\}.
\]
The frequency bands $W_k$ are pairwise disjoint, so these coefficient
blocks form a direct sum.  Lemma~\ref{lem:visibility} gives triangular
visibility and diagonal replication, while
Proposition~\ref{prop:block-rank} gives diagonal rank
\begin{equation}\label{eq:block-rank-value}
 m_{k\ell}=d_kd_\ell.
\end{equation}
The weight
\begin{equation}\label{eq:weight}
 w(k,\ell)=k+\ell
\end{equation}
is strictly increasing in the coordinatewise order.

For each $(k,\ell)$, Lemma~\ref{lem:rank-to-roots} chooses a block
$S_{k\ell}\in E_{k\ell}$ whose diagonal germ has
$d_kd_\ell-1$ small positive simple roots.  Theorem
\ref{thm:hierarchical} then supplies a common sufficiently small
$\zeta>0$ such that the single density
\begin{equation}\label{eq:global-density}
 S_\zeta=\sum_{k,\ell=1}^r\zeta^{k+\ell}S_{k\ell}
\end{equation}
retains these roots in every cell of every layer.

In this application, write $\widehat M_{ij}=\sqrt{\lambda_n}M_{ij}/(2\pi h)$
and let $G_{k\ell}$ be the diagonal germ associated with $S_{k\ell}$.
For $(i,j)\in Y_k\times Y_\ell$, on the chosen compact root
neighborhoods $K_{k\ell}$, the expansion
\eqref{eq:triangular-expansion} takes the form
\begin{equation}\label{eq:uniform-remainder}
 \zeta^{-(k+\ell)}\widehat M_{ij}(h)
 =\gamma_{ij}G_{k\ell}(h)+R_{ij,\zeta}(h),\qquad
 \|R_{ij,\zeta}\|_{C^1(K_{k\ell})}\leq C_n\zeta,
\end{equation}
where $\gamma_{ij}\neq0$ and $C_n$ is independent of $\zeta$.
Thus the simple-zero stability argument of Theorem~\ref{thm:hierarchical}
applies simultaneously to all cells.  The nonzero area factor in
\eqref{eq:full-Melnikov} preserves these simple zeros at positive energies.

\subsection{From simple zeros to hyperbolic limit cycles}

We use the following finite-family version of simple-zero persistence.

\begin{lemma}[Finite-family Poincar\'e--Pontryagin persistence]
\label{lem:finite-PP}
Let finitely many compact subannuli of period annuli carry return maps
$\Pi_{\nu,\mu}(h)=h+\Delta_\nu(h,\mu)$ in energy coordinates, with
\[
 \Delta_\nu(h,\mu)=\mu M_\nu(h)+O_{C^1}(\mu^2)
\]
uniformly on their fixed energy intervals.  Mark a simple zero
$h_{\nu,0}$ of each $C^1$ function $M_\nu$ in the interior of its interval.
Choose pairwise disjoint compact annular neighborhoods small enough that
$M_\nu'$ has constant nonzero sign on each corresponding energy interval.
Then one number $\mu_0>0$ works for the whole family: for every
$0<|\mu|<\mu_0$, each marked zero gives a unique nearby hyperbolic
limit cycle.
\end{lemma}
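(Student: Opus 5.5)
The argument reduces the persistence question to finding zeros of the normalized displacement $\Delta_\nu(h,\mu)/\mu$, treated one marked zero at a time and then made uniform over the finite family. Closed orbits near the marked level $h_{\nu,0}$ are exactly fixed points of $\Pi_{\nu,\mu}$, that is, zeros of $\Delta_\nu(\cdot,\mu)$. For $\mu\neq0$ these coincide with zeros of
\[
 D_\nu(h,\mu):=\mu^{-1}\Delta_\nu(h,\mu)=M_\nu(h)+O_{C^1}(\mu).
\]
The $O_{C^1}(\mu)$ bound holds uniformly on the fixed energy interval $J_\nu$ attached to the chosen annular neighborhood; this uses the hypothesis $\Delta_\nu=\mu M_\nu+O_{C^1}(\mu^2)$, divided by $\mu$.

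\textbf{Step 1: a uniform margin.} On $J_\nu=[h_{\nu,0}-\delta_\nu,h_{\nu,0}+\delta_\nu]$ we have $|M_\nu'|\geq m_\nu>0$, with constant sign. Since $M_\nu(h_{\nu,0})=0$, the endpoint values satisfy $|M_\nu(h_{\nu,0}\pm\delta_\nu)|\geq m_\nu\delta_\nu$, and they have opposite signs. Let $C_\nu$ be the constant in the $C^1$ remainder. Choose $\mu_\nu>0$ so small that $C_\nu\mu_\nu<\tfrac12\min(m_\nu,m_\nu\delta_\nu)$.

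\textbf{Step 2: existence, uniqueness and hyperbolicity.} For $0<|\mu|<\mu_\nu$:
\begin{itemize}
\item The values of $D_\nu$ at the two endpoints of $J_\nu$ keep opposite signs, so the intermediate value theorem gives a zero $h_\nu(\mu)\in J_\nu$.
\item $|\partial_hD_\nu|\geq m_\nu/2>0$ on $J_\nu$, so $D_\nu$ is strictly monotone there and the zero is unique.
\item $\Pi_{\nu,\mu}'(h_\nu(\mu))=1+\mu\,\partial_hD_\nu(h_\nu(\mu))\neq1$, because $\mu\neq0$ and $\partial_hD_\nu\neq0$. Hence the fixed point is hyperbolic and the corresponding periodic orbit is a hyperbolic, in particular isolated, limit cycle.
\end{itemize}
If the paper's convention requires $\mu_0$ to make $\Pi'\in(0,1)\cup(1,\infty)$ explicitly, I would further shrink $\mu_\nu$ so that $|\mu\,\partial_hD_\nu|<1$.

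\textbf{Step 3: uniformity over the family.} Set $\mu_0=\min_\nu\mu_\nu$; this is positive because the family is finite. The neighborhoods were chosen pairwise disjoint, so the cycles produced for different marked zeros are distinct. Uniqueness inside each neighborhood means no marked zero is counted twice.

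\textbf{Main obstacle.} The key requirement is that the $O_{C^1}(\mu^2)$ remainder is uniform in $h$ over each fixed compact energy interval, since it is what allows a single $\mu_\nu$ to control both the endpoint signs and the derivative bound. This is given in the hypotheses. In the application it follows from smooth dependence of the return map on $(h,\mu)$ on a compact subannulus away from the center, via the Poincar\'e--Pontryagin theorem. A second subtlety is a transversal that is parametrized by energy but mixes annuli; disjointness of the compact neighborhoods handles this.
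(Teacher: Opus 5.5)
Your proposal is correct and follows essentially the same argument as the paper: divide the displacement by $\mu$, use the endpoint sign change together with the derivative bound on each marked interval to get a unique zero, read off $\Pi'_{\nu,\mu}-1=\mu\,\partial_hD_\nu\neq0$ for hyperbolicity, and take the minimum threshold over the finite family. Make the extra shrinking to $|\mu\,\partial_hD_\nu|<1$ unconditional rather than optional, since it is what excludes $\Pi'=-1$; this is the paper's remark that the multiplier tends to one and is therefore positive.
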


\begin{proof}
For $\mu\neq0$, divide the displacement by $\mu$.  On each marked
interval the result is $M_\nu+O_{C^1}(\mu)$.  Its endpoint values have
opposite signs and its derivative has constant nonzero sign for small
$|\mu|$, so it has a unique zero $h_\nu(\mu)$ there, with
$h_\nu(\mu)\to h_{\nu,0}$.  At this perturbed fixed point,
\[
 \frac{\Pi_{\nu,\mu}'(h_\nu(\mu))-1}{\mu}
 =M_\nu'(h_\nu(\mu))+O(|\mu|)
 \longrightarrow M_\nu'(h_{\nu,0})\neq0.
\]
Thus the multiplier differs from one for sufficiently small nonzero
$\mu$.  It also tends to one, so it is positive and has modulus different
from one.  The corresponding periodic orbit is therefore hyperbolic.
Each neighborhood gives a positive threshold.  Taking the minimum
over this finite family gives a common positive threshold.
\end{proof}

Take $S=S_\zeta$ in \eqref{eq:perturbation-P} and consider the vector
field \eqref{eq:field}.  After $\zeta$ has been fixed, the target roots form a finite set of
strictly positive simple roots.  For different target energy curves
in one cell, choose pairwise disjoint closed annular neighborhoods.
Energy curves in different cells lie compactly in disjoint rectangles
$I_i\times I_j$, so all these annuli can be chosen pairwise disjoint.

Each selected annulus lies away from the center and the boundary of its
Chebyshev cell.  The coordinates in \eqref{eq:ellipse} show that
$\nabla H_{n,\lambda_n}\neq0$ there.  Choose a transverse section through
the marked orbit, parametrized by the unperturbed energy $h$, and shrink
its energy interval if necessary.  The polynomial vector field depends
analytically on $(x,y,\mu)$; analytic dependence of the local flow and the
implicit function theorem at the transverse return give an analytic
return time and return map on a neighborhood of this compact interval.
Nearby orbit segments remain in the chosen disjoint annular neighborhoods
by continuous dependence on initial data and $\mu$.

At $\mu=0$ the return map is the identity, and its first energy variation
is $M_\nu(h)=-\oint P\,dy$ as in \eqref{eq:Melnikov-area}.  Taylor
expansion in $\mu$ therefore gives
\begin{equation}\label{eq:PP-expansion}
 \Delta_\nu(h,\mu)=\mu M_\nu(h)+\mu^2R_\nu(h,\mu).
\end{equation}
Analyticity on a neighborhood of the compact interval bounds both
$R_\nu$ and $\partial_hR_\nu$ uniformly for small $|\mu|$, giving the
required $O_{C^1}(\mu^2)$ remainder.  For background on bifurcations
of regular limit periodic sets, see \cite[Chapter~4]{Roussarie1998}.
For an algorithm computing successive return-map derivatives in
polynomial perturbations of a linear center, see \cite{Francoise1996}.  Lemma
\ref{lem:finite-PP} then gives one common sufficiently small nonzero
$\mu$ for all target annuli.
The resulting limit cycles are pairwise distinct because their annuli
are disjoint.  We now count this family and bound the degree of its
polynomial vector field.

\subsection{Counting and degree bounds}

Each prescribed simple zero gives one of the distinct hyperbolic limit
cycles constructed above.  The number of cycles in this family is, by
\eqref{eq:balanced-layers} and \eqref{eq:row-sum},
\begin{align}
 L_n
 &=\sum_{k,\ell=1}^r y_ky_\ell(d_kd_\ell-1)\notag\\
 &=\left(\sum_{k=1}^ry_kd_k\right)^2
   -\left(\sum_{k=1}^ry_k\right)^2\notag\\
 &=\left(\frac{2nr}{3}\right)^2-(n-1)^2
  =\frac{4n^2r^2}{9}-(n-1)^2.
 \label{eq:exact-count}
\end{align}

All frequencies in the union of the $W_k$ are positive, odd, and at
most $n-2$.  Therefore every mode in \eqref{eq:global-density} has
$a+b\leq2n-4$, so
\begin{equation}\label{eq:degree-audit}
 \deg S_\zeta\leq2n-4,\qquad
 \deg q\leq4n-6,\qquad
 \deg P\leq4n-5.
\end{equation}
The Hamiltonian part of \eqref{eq:field} has degree $2n-1$.
Consequently the full polynomial vector field has degree at most
$4n-5$.

This proves Theorem~\ref{thm:main}, including the subsequence bound
\eqref{eq:subsequence-headline}.

\paragraph{A finite example.}
For $n=9$, exact computations give full rank in all four blocks at
$\lambda=2$.  At $\lambda=1$, the block $W_2\times W_2$ has rank six
instead of nine: the symmetry
$\widehat F_{a,b}^{(9,1)}=\widehat F_{b,a}^{(9,1)}$ leaves at most six
distinct columns, and a nonzero minor modulo $1\,000\,000\,007$
gives the matching lower bound.  All denominators are invertible
modulo this prime, so this is an exact rank certificate over
$\mathbb Q$.  For $n=27$, all nine blocks have full rank at
$\lambda=2$.  The corresponding theoretical counts from
\eqref{eq:exact-count} are $L_9=80$ and $L_{27}=2240$, with degree
bounds $31$ and $103$, respectively.

\subsection{Proof of Corollary~\ref{cor:all-degrees}}
\label{subsec:all-degrees}

Fix an integer $d\geq 31=4\cdot 3^2-5$,
there is a unique integer $r\geq2$ satisfying
\begin{equation}\label{eq:degree-bracket}
 4\cdot3^r-5\leq d<4\cdot3^{r+1}-5.
\end{equation}
It follows that
\[
\frac{d+5}{12}<3^r\leq \frac{d+5}{4}, \mbox{ and }r>\log_3\!\left(\frac{d+5}{12}\right).
\]

For $r\geq2$, the exact count from the previous subsection yields
$L_{3^r}
 =\frac49r^2\,3^{2r}-(3^r-1)^2$. By monotonicity of $H$, we therefore obtain
\begin{align*}
 H(d)\geq L_{3^r}=\frac{4}{9}3^{2r}r^2-(3^r-1)^2
&>\frac{4}{9}\left(\frac{d+5}{12}\right)^2 \log_3^2\left(\frac{d+5}{12}\right)-\left(\frac{d+5}{4}-1\right)^2\\
&=\frac{1}{324}(d+5)^2\log_3^2\left(\frac{d+5}{12}\right)-\frac{1}{16}(d+1)^2.
\end{align*}%
This establishes Corollary~\ref{cor:all-degrees}.

\section{Formal verification}\label{sec:formalization}

The accompanying Lean~4 development
\cite{DeMouraUllrich2021,Mathlib2020} formalizes the construction from
the analytic kernel and hierarchical realization of simple zeros
through the Chebyshev geometry, polynomial flow, and local Poincar\'e
return maps. It proves persistence, isolation, and pairwise
separation of the resulting periodic orbits for one common
perturbation parameter, together with the degree and counting
bounds. The proof has no
unproved placeholders or project-specific axioms; its logical
dependencies and build instructions are documented with the code.

The formal statements assert the existence of a polynomial vector
field of the required degree bound and an injective finite family
of hyperbolic limit cycles.  Hyperbolicity is expressed through local
return maps with multipliers $\rho>0$ and $|\rho|\ne1$.
The exact subsequence count agrees with
\eqref{eq:exact-count-headline}.  For every integer $d\geq31$, the all-degree statement gives a polynomial vector field of degree at most $d$ and an
injective family of hyperbolic limit cycles whose cardinality
is at least the right-hand side of \eqref{eq:all-N-headline}. These existence statements imply the stated lower bounds for $H(d)$ by \eqref{eq:def-H}.

\section*{Code availability}

The Lean formalization and exact-rank scripts are available in the
\href{https://github.com/AscendFoam/hierarchical-melnikov-lean}
{project repository}.
The version accompanying this manuscript is archived at
\href{https://doi.org/10.5281/zenodo.23219558}
{\nolinkurl{doi:10.5281/zenodo.23219558}}.

\section*{AI-assistance disclosure}

GPT-5.6 Sol was used to develop the proof strategy and arguments throughout
this paper, assist with Lean formalization, and edit the manuscript.
The authors take full responsibility for the mathematical content and
the final text.

\section*{Remark}

The main results were obtained by the end of August, at which time the finiteness of the Hilbert number $H(d)$ remained an open problem. We subsequently devoted additional effort to understanding, refining, and revising the manuscript. A draft by OpenAI~\cite{OpenAI2026HilbertFiniteness} claiming a proof of this finiteness statement was released on October 6, 2026. Our results consist of new explicit lower bounds for $H(d)$. These results address a complementary aspect of the problem and are independent of this newly released draft.

\begingroup
\small
\bibliographystyle{alpha}
\bibliography{references}
\endgroup

\end{document}